 \newtheorem{lemma}{Lemma}[section]
 \newtheorem{theorem}{Theorem}[section]
\newtheorem{corollary}{Corollary}[section]
\journal{Discrete Appl. Math.}
\begin{document}
\begin{frontmatter}

\title{Equitable chromatic threshold of complete multipartite graphs }
\author{ Zhidan Yan }
\author{ Wei Wang\corref{cor1}}
\cortext[cor1]{Corresponding author.
Fax:+86-997-4682766.\\
\ead{wangwei.math@gmail.com}}

\address{College of Information Engineering, Tarim University, Alar 843300, China}

\begin{abstract}
A proper vertex coloring of a graph is equitable if the sizes of
color classes differ by at most one. The equitable chromatic number
of a graph $G$, denoted by $\chi_=(G)$, is the minimum $k$ such that
$G$ is equitably $k$-colorable. The equitable chromatic threshold of
a graph $G$, denoted by $\chi_=^*(G)$, is the minimum $t$ such that
$G$ is equitably $k$-colorable for $k\ge t$. We develop a formula
and a linear-time algorithm which compute the equitable chromatic
threshold of an arbitrary complete multipartite graph.
\end{abstract}

\begin{keyword}
equitable coloring \sep equitable chromatic threshold \sep complete
multipartite graphs \MSC 05C15
\end{keyword}

\end{frontmatter}

\section{Introduction}
\label{intro} All graphs considered in this paper are finite,
undirected and without loops or multiple edges. For a positive
integer $k$, let $[k] = \{1,2,\cdots,k\}$. A proper $k$-coloring of
a graph $G$ is a mapping $f : V(G) \rightarrow [k]$ such that $f(x)
\neq f(y)$ whenever $xy \in E(G)$. We call the set $f^{-1}(i)= \{x
\in V(G) : f(x) = i\}$ a color class for each $i \in [k]$. A graph
is $k$-colorable if it has a $k$-coloring. The chromatic number of
$G$, denoted by $\chi(G)$, is equal to min\{$k$ : $G$ is
$k$-colorable\}. An equitable $k$-coloring of $G$ is a $k$-coloring
for which any two color classes differ in size by at most one, or
equivalently, each color class is of size $\lfloor|V(G)|/k\rfloor$
or $\lceil|V(G)|/k\rceil$. If $G$ has $n$ vertices, we write $n = kq
+ r$ with $0\leq r < k$, then we can rewrite $n = (k-r)q + r(q +
1)$, or equivalently, exactly $r$ (respectively, $k - r$) color
classes have size $q + 1$ (respectively, $q$). The equitable
chromatic number of $G$, denoted by $\chi_ = (G)$, is equal to
min\{$k$ : $G$ is equitably $k$-colorable \}, and the equitable
chromatic threshold of a graph $G$, denoted by $\chi_=^*(G)$, is
equal to min \{$t$ : $G$ is equitably $k$-colorable for $k\geq t$\}.

  The concept of equitable colorability was first introduced by Meyer \cite{W. Meyer1973}. The definitive survey of the subject is
    by Lih \cite{K.-W. Lih1998}.
   Many application such as scheduling and constructing timetables,
    please see \cite{B.Baker1996, S. Irani1996, S. Janson2002, F. Kitagawa1988, M.J. Pelsmajer2004, B.F. Smith1996, A.
    Tucker1973}.

  In 1964, Erd\H{o}s \cite{P. Erdos1964} conjectured that any graph $G$ with maximum degree $\Delta(G)\le k$
   has  an equitable $(k + 1)$-coloring, or equivalently, $\chi_=^*(G)\leq \Delta(G) + 1$. This conjecture was proved
    in 1970 by Hajnal and Szemer\'{e}di \cite{A. Hajnal1970} with a long and complicated proof,
    a polynomial algorithm for such a coloring was found  by Mydlarz and Szemer\'{e}di \cite{M. MydlarzManuscript}.
     Kierstead and Kostochka \cite{H.A. Kierstead2008} gave a short
     proof of the theorem, and presented another polynomial algorithm for such a
     coloring. Brooks' type results are conjectured: Equitable Coloring
     Conjecture \cite{W. Meyer1973} $\chi_=(G) \leq \Delta(G)$, and
     Equitable $\Delta$-Coloring Conjecture \cite{B.-L. Chen1994b} $\chi_=^*(G)\leq
     \Delta(G)$ for $G\notin\{K_n, C_{2n+1}, K_{2n+1,2n+1}\}$. Exact values of
     equitable chromatic numbers of trees \cite{B.-L. Chen1994a} and
     complete multipartite graphs \cite{D.Blum2003},
     \cite{P.C.B. Lam2001} were determined. Our article determines the
     exact value of equitable chromatic threshold of complete multipartite
     graphs.

     The formula which is different from ours was established independently in a manuscript by Chen and Wu, and was reported
     in \cite{K.-W. Lih1998}. However, Chen and Wu never published
     their proof. To our knowledge, this article contains the only published proof.

\section{The results}
 Before stating our main result, we need several  preliminary results on integer partitions.
 Recall that a partition of an integer $n$ is a sum of the form $n = m_1 + m_2 + \cdots + m_k$, where $0\leq m_i \leq n$
  for each $0\leq i \leq k$. We call such a partition a $q$-partition if each $m_i$ is in the set $\{q, q+1\}$.
  A $q$-partition of $n$ is typically denoted as $n = aq + b(q + 1)$, where $n$ is the sum of $a$ $q$'s and $b$ $q + 1$'s.
  A $q$-partition of $n$ is called a minimal $q$-partition if the number of its addends, $a + b$, is as small as possible.
  A $q$-partition of $n$ is called a maximal $q$-partition if the number of its addends, $a + b$, is as large as possible.
  For example, $2 + 2 + 2 + 2$ is a maximal $2$-partition of $8$, and $2 + 3 + 3$ is a minimal $2$-partition of
  $8$. If $q|n$, or equivalently, $n = kq$, with $k \geq 1$, thus we
  write $n = 0(q-1) + kq$ (respectively, $n = kq + 0(q + 1)$), then
  there are both $(q - 1)$-partition and $q$-partition of $n$. For
  example, since $2|8$, we write $8 = 0 \times 1 + 4 \times 2 $ (respectively, $8 = 4 \times 2 + 0 \times 3$), then
  there are both $1$-partition and $2$-partition of $8$.

  Our first lemma is from \cite{D.Blum2003}, which study the condition of which a $q$-partition of
  $n$ exists. For the sake of completeness, here we restate their proof.
   In what follows, all variables are nonegative
  integers.

 \begin{lemma}\label{basic}\cite{D.Blum2003}
 If $0 < q \leq n$, and $n=kq+r$ with $0 \leq r < q$, then there is
 a $q$-partition of $n$ if and only if $r\leq k $.
 \end{lemma}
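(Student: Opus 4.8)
The plan is to prove the two implications separately, after first recasting a $q$-partition into a more convenient shape. If $n = aq + b(q+1)$ is a $q$-partition, then collecting terms gives $n = (a+b)q + b$; writing $s = a+b$ for the total number of addends, a $q$-partition is equivalent to a representation $n = sq + b$ subject to the single constraint $0 \le b \le s$ (the inequality $b \le s$ simply records that one cannot use more $(q+1)$-addends than addends in total). This reformulation is what links an arbitrary partition to the canonical division data $n = kq + r$ with $0 \le r < q$.

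For the ``if'' direction, suppose $r \le k$. I would exhibit a partition directly by taking $a = k - r$ and $b = r$. Since $r \le k$ we have $a \ge 0$, and $b = r \ge 0$, so both addend counts are nonnegative; moreover $aq + b(q+1) = (k-r)q + r(q+1) = kq + r = n$. Hence this is a valid $q$-partition, which settles this direction.

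For the ``only if'' direction, suppose a $q$-partition $n = aq + b(q+1)$ exists, and set $s = a + b$, so that $n = sq + b$ with $0 \le b \le s$. The key step is to bound $s$ against $k$: since $b \ge 0$ we have $sq \le n = kq + r < (k+1)q$, where the last inequality uses $r < q$, and dividing through by $q$ forces $s \le k$. It then follows that $b - r = (n - sq) - (n - kq) = (k-s)q \ge 0$, so $r \le b$. Chaining the inequalities yields $r \le b \le s \le k$, which is exactly the desired conclusion $r \le k$.

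The reverse direction is immediate from the explicit construction, so the whole content of the argument lies in the forward direction, and the one step that needs care is the bound $s \le k$. That inequality is precisely where the hypothesis $r < q$ (the defining feature of the division remainder) enters: it guarantees $n < (k+1)q$ and hence that no representation of $n$ can use more than $k$ addends, after which the remaining inequalities are purely formal.
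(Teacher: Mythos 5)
Your proof is correct and follows essentially the same route as the paper: the same explicit construction $n=(k-r)q+r(q+1)$ for the ``if'' direction, and for the ``only if'' direction the same regrouping $n=(a+b)q+b$ leading to the chain $r\le b\le a+b\le k$. The only difference is that you spell out the justifications for $a+b\le k$ and $r\le b$ (via $n<(k+1)q$ and $b-r=(k-s)q\ge 0$), which the paper leaves implicit.
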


 \begin{proof}
 If $r\leq k $, then $n = (k - r)q + r(q + 1)$ is a $q$-partition of $n$.
 Conversely, given a $q$-partition $n = aq + b(q + 1)$ of $n$, we have
 $n = (a + b)q + b$, so $(a + b) \leq k$ and $r \leq b$. Consequently, $r \leq b \leq (a + b) \leq
  k$.
 \end{proof}

\begin{corollary}\label{noq-partion}
There is no $q$-partition of $n$ if and only if $n/(q + 1)> \lfloor
n/q \rfloor$.
\end{corollary}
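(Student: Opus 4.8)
The plan is to derive this directly from Lemma~\ref{basic} by rewriting the arithmetic condition $r \le k$ in terms of the floor and the ratio $n/(q+1)$. First I would set up the division $n = kq + r$ with $0 \le r < q$, so that by definition $k = \lfloor n/q \rfloor$ and $r = n - q\lfloor n/q \rfloor$. Lemma~\ref{basic} then states that a $q$-partition of $n$ exists precisely when $r \le k$. Taking the contrapositive, no $q$-partition exists precisely when $r > k$, i.e.\ when $r \ge k+1$. This reduces the corollary to showing that the inequality $r > k$ is equivalent to $n/(q+1) > \lfloor n/q \rfloor$.

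The second step is the purely algebraic translation of $r > k$ into the stated inequality. Since $n = kq + r$ and $k = \lfloor n/q \rfloor$, I would compute
$$ \frac{n}{q+1} > \lfloor n/q \rfloor = k \iff n > k(q+1) = kq + k \iff r = n - kq > k. $$
Chaining these equivalences with the contrapositive of Lemma~\ref{basic} yields exactly the claim: there is no $q$-partition of $n$ if and only if $n/(q+1) > \lfloor n/q \rfloor$.

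The argument is essentially a one-line reformulation, so I do not expect a genuine obstacle. The only points that require care are the standing hypothesis $0 < q \le n$ inherited from Lemma~\ref{basic}, which guarantees that the division $n = kq+r$ is valid and that $k = \lfloor n/q \rfloor \ge 0$; and verifying that each step in the displayed chain is a true equivalence rather than a one-way implication. The latter holds because each manipulation is multiplication or subtraction by a fixed positive quantity (namely $q+1$ and $kq$), both of which are reversible, so no information is lost in either direction.
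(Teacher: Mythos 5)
Your proposal is correct and follows essentially the same route as the paper's own proof: both write $n = kq + r$ via the division algorithm, identify $k = \lfloor n/q \rfloor$, negate Lemma~\ref{basic} to get "no $q$-partition iff $r > k$," and translate that inequality algebraically into $n > \lfloor n/q \rfloor(q+1)$, i.e.\ $n/(q+1) > \lfloor n/q \rfloor$. Your version is slightly more explicit about the reversibility of each step, but the content is identical.
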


\begin{proof}
Using the division algorithm, write $n = kq + r$, with $0 \leq r <
q$. Then $k = \lfloor n/q \rfloor$, and $r = n - \lfloor n/q \rfloor
q$. Lemma \ref{basic} implies that there is no $q$-partition of $n$
if and only if $r>k$, hence $n - \lfloor n/q \rfloor q
> \lfloor n/q \rfloor$, we can rewrite $n > \lfloor n/q \rfloor (q +
1)$. The Corollary \ref{noq-partion} follows immediately.
\end{proof}

The next two lemmas give conditions under which a $q$-partition of
$n$ is maximal (respectively, minimal).

 \begin{lemma}\label{maximal1}
 A $q$-partition $n = aq + b(q + 1)$ of $n$ is maximal if and only if $b < q$. Moreover a maximal
 $q$-partition is unique.
 \end{lemma}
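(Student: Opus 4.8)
The plan is to reduce the entire statement to a single counting identity relating the number of addends of a $q$-partition to the value of $b$. Writing $n = kq + r$ with $0 \le r < q$ via the division algorithm, I would first record that by Lemma \ref{basic} a $q$-partition exists precisely when $r \le k$, so there is at least one partition to compare against. Given any $q$-partition $n = aq + b(q + 1)$, the key observation is the rewriting $n = (a + b)q + b$, which shows $(a + b)q = n - b$ and hence that the number of addends is $a + b = (n - b)/q$. Thus the number of addends is a strictly decreasing function of $b$, so maximizing the number of addends is the same as minimizing $b$.

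Next I would pin down the admissible values of $b$. Since $a + b = (n - b)/q$ must be a nonnegative integer, $b$ has to satisfy $q \mid (n - b)$, equivalently $b \equiv r \pmod{q}$, together with the nonnegativity constraints $b \ge 0$ and $a = (n - b)/q - b \ge 0$. The smallest nonnegative integer in the residue class of $r$ modulo $q$ is $b = r$ itself, and because $r < q$ this choice automatically gives $b < q$; the corresponding partition has $a = k - r$, which is nonnegative exactly by the existence condition $r \le k$. Since $b$ is confined to the set $\{r, r + q, r + 2q, \dots\}$, the value $b = r$ is the unique minimizer, so it yields strictly more addends than any partition with $b \ge q$.

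From this the lemma follows in one stroke. A $q$-partition is maximal if and only if it attains the minimal admissible value $b = r$, and this is the unique partition with $b < q$; hence maximality is equivalent to $b < q$, and the maximizer, being the single partition with $b = r$ (so $a = k - r$), is unique. As an alternative route for the ``only if'' direction one may argue more combinatorially: whenever $b \ge q$ one can replace $q$ of the $(q + 1)$-addends by $q + 1$ copies of $q$, since $q(q + 1) = (q + 1)q$, producing a $q$-partition with one additional addend and thereby witnessing non-maximality.

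The step requiring the most care is the bookkeeping around nonnegativity: I must verify that $b = r$ genuinely produces a legal partition, i.e.\ that $a = k - r \ge 0$, which is exactly where the hypothesis that a $q$-partition exists---equivalently $r \le k$ from Lemma \ref{basic}---is used, and that no smaller value of $b$ in the correct residue class is available. Once the identity $a + b = (n - b)/q$ and the congruence $b \equiv r \pmod{q}$ are in hand, both the characterization and the uniqueness are immediate, so the only real obstacle is setting up this parametrization cleanly rather than any genuine difficulty.
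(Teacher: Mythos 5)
Your proposal is correct and takes essentially the same route as the paper: both rest on the identity $a + b = (n-b)/q$ (so the number of addends is strictly decreasing in $b$, and maximality means minimizing $b$), and your observation that admissible values of $b$ form the residue class $\{r, r+q, r+2q,\dots\}$ is exactly the content of the paper's subtraction step $b - b' = (m'-m)q$. Your alternative ``swap $q$ copies of $q+1$ for $q+1$ copies of $q$'' argument is precisely the paper's own proof of the converse direction, so the two proofs coincide in substance, with yours merely packaged more explicitly via the division algorithm.
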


 \begin{proof}
 Regard $a$ and $b$ as variables, and $q$ as fixed. Solving the
 linear relation $n = aq + b(q + 1)$ for $a$ yields $a + b=(n - b)/q$. Thus
 $a + b$ is a strictly decreasing function of $b$, and moreover $a + b$
 decreases as $b$ increases. Therefore, the $q$-partition
 $n = aq + b(q + 1)$ will be maximal exactly when $b$ is the smallest
 non-negative integer for which $(n - b)/q$ is an integer. Once $b$ is
 fixed, $a$ is determined by the equation $n =aq + b(q + 1)$. Uniqueness
 of maximal $q$-partition follows.

Now suppose $n = aq + b(q + 1)$ is a $q$-partition, and $b < q$. By
what was said in the previous paragraph, $m = (n - b)/q$ is an
integer. If the partition is not maximal, then there are integers
$b^\prime$ and $m^\prime$, with $b > b^\prime \geq 0$ and $m^\prime
> m
> 0$, for which $m^\prime = (n - b^\prime)/q$. Subtracting
$n = m^\prime q + b^\prime$ from $n = mq + b$ gives $b - b^\prime =
(m^\prime-m)q$, so $b >( b - b^\prime) \geq q$.

Conversely, if $n = aq + b(q + 1)$ is a maximal $q$-partition of
$n$, it is impossible for $b \ge q$, for otherwise $n = (a + q + 1)q
+ (b - q)(q + 1)$ is a $q$-partition of $n$ with $a + q + 1 + b
 - q = a + b + 1$ addends, contradicting maximality. Thus, $b < q$.
\end{proof}

\begin{lemma}\label{minimal1}\cite{D.Blum2003}
A $q$-partition $n = aq + b(q + 1)$ of $n$ is minimal if and only if
$a < q + 1$. Moreover a minimal
 $q$-partition is unique.
 \end{lemma}

 Now it is possible to describe exactly the number of addends in a
 maximal (respectively, minimal) $q$-partition.

\begin{lemma}\label{minandmax1}
If $n= aq + b(q + 1)$ is a minimal $q$-partition, then $a + b =
\lceil n/(q + 1)\rceil$. If $n = a^\prime q + b^\prime(q + 1)$ is a
maximal $q$-partition, then $a^\prime + b^\prime = \lfloor n/q
\rfloor$. Moreover, when $\lceil n/(q + 1)\rceil = \lfloor n/q
\rfloor$, there is only one $q$-partition of $n$.
\end{lemma}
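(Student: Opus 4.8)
The plan is to treat the maximal and minimal counts separately, using the structural characterizations already available in Lemmas~\ref{maximal1} and~\ref{minimal1}, and then to deduce the uniqueness clause by comparing the two addend counts. Both of the first two claims reduce to recognizing a familiar division-algorithm normal form.

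For the maximal count I would start from a maximal $q$-partition $n = a'q + b'(q+1)$. Lemma~\ref{maximal1} supplies the key constraint $b' < q$. Rewriting the defining relation as $n = (a'+b')q + b'$ exhibits $n$ in the shape ``quotient times $q$ plus a remainder lying in $[0,q)$'', so by uniqueness of the division algorithm the quotient $a'+b'$ must equal $\lfloor n/q \rfloor$. For the minimal count I would begin from a minimal $q$-partition $n = aq + b(q+1)$, where Lemma~\ref{minimal1} gives $a < q+1$, i.e. $0 \le a \le q$. Adding $a$ to both sides yields $(a+b)(q+1) = n + a$, equivalently $n = (a+b)(q+1) - a$ with $0 \le a \le q$. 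Dividing by $q+1$ then places $n/(q+1)$ inside the half-open interval $(a+b-1,\, a+b]$, so $\lceil n/(q+1)\rceil = a+b$. The one spot that needs care is the endpoint behaviour of this interval: I must check that the upper endpoint is attained exactly when $a=0$ (that is, when $(q+1)\mid n$) and that the lower bound is strict, so that the ceiling evaluates to $a+b$ in every case rather than jumping to $a+b-1$.

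For the final clause I would first confirm that a $q$-partition even exists under the hypothesis $\lceil n/(q+1)\rceil = \lfloor n/q\rfloor$. Since $n/(q+1) \le \lceil n/(q+1)\rceil = \lfloor n/q\rfloor$, the negation $n/(q+1) > \lfloor n/q\rfloor$ of existence in Corollary~\ref{noq-partion} fails, so a $q$-partition is guaranteed. Now every $q$-partition has an addend count lying between the minimal value $\lceil n/(q+1)\rceil$ and the maximal value $\lfloor n/q\rfloor$ established above; when these two bounds coincide, all $q$-partitions share the same number of addends, say $k$. Finally, for a fixed $k$ the relation $n = aq + b(q+1) = kq + b$ forces $b = n - kq$ and hence $a = k-b$, pinning down a single partition, which gives uniqueness.

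I expect the genuine content to be light: the floor/ceiling identities are the heart of the matter, and the only steps where I anticipate needing to slow down are verifying the endpoint inequalities in the ceiling computation for the minimal case, and explicitly invoking Corollary~\ref{noq-partion} to secure existence before appealing to the bracketing-and-uniqueness argument in the last clause.
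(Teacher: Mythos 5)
Your proposal is correct and follows essentially the same route as the paper: both counting formulas are obtained exactly as in the paper's proof, by combining the constraints $b' < q$ (Lemma~\ref{maximal1}) and $a < q+1$ (Lemma~\ref{minimal1}) with the division-algorithm normal form $n = (a'+b')q + b'$, respectively $n+a = (a+b)(q+1)$. The only difference is in the finish of the uniqueness clause, and it is minor: the paper cites the uniqueness of the minimal and maximal $q$-partitions from Lemmas~\ref{minimal1} and~\ref{maximal1}, whereas you pin the partition down directly from the common addend count $k$ via $b = n - kq$, $a = k - b$, and you additionally invoke Corollary~\ref{noq-partion} to guarantee existence --- a small but welcome extra bit of care, since ``there is only one'' should not be read vacuously.
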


\begin{proof}
If $n = aq + b(q + 1)$ is a minimal $q$-partition, then $a + b = (n
+ a)/(q + 1)$, with $a < q + 1$ by Lemma \ref{minimal1}, and so $a +
b = \lceil n/(q + 1)\rceil$. If $n = a^\prime q + b^\prime(q + 1)$
is a maximal $q$-partition, then $a^\prime + b^\prime = (n -
b^\prime)/q$, with $b^\prime < q$ by Lemma\ref{maximal1}, and so
$a^\prime + b^\prime = \lfloor n/q \rfloor$. Now, if $\lceil n/(q +
1)\rceil = \lfloor n/q \rfloor$, then $a + b = a^\prime + b^\prime$.
From Lemma \ref{minimal1} and Lemma \ref{maximal1}, we know that the
minimal (respectively, maximal) $q$-partition is unique.
Consequently, if $\lceil n/(q + 1)\rceil = \lfloor n/q \rfloor$,
then there is only one $q$-partition of $n$.
\end{proof}

\begin{lemma}\label{minandmax2}
Let $n = aq + b(q + 1)$ be the maximal $q$-partition, and $n =
a^\prime (q-1) + b^\prime q$ be the minimal $(q - 1)$-partition. If
$q|n$ then $a + b = a^\prime + b^\prime$, otherwise, $a + b + 1 =
a^\prime + b^\prime$.
\end{lemma}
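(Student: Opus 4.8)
The plan is to reduce both quantities $a+b$ and $a'+b'$ to the closed-form counts already furnished by Lemma \ref{minandmax1}, and then to finish with the elementary relationship between $\lfloor n/q\rfloor$ and $\lceil n/q\rceil$.

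First I would apply Lemma \ref{minandmax1} directly to the maximal $q$-partition $n = aq + b(q+1)$, which gives immediately $a + b = \lfloor n/q\rfloor$. Next, the key observation is that a $(q-1)$-partition is nothing but a $q'$-partition with $q' = q-1$: its two admissible part sizes are $q' = q-1$ and $q'+1 = q$. Hence I would invoke the minimal half of Lemma \ref{minandmax1} with $q$ replaced throughout by $q-1$, obtaining $a' + b' = \lceil n/((q-1)+1)\rceil = \lceil n/q\rceil$.

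Finally, I would combine the two identities by comparing $\lfloor n/q\rfloor$ with $\lceil n/q\rceil$. When $q \mid n$, both expressions equal $n/q$, so $a+b = a'+b'$; when $q \nmid n$, the standard identity $\lceil n/q\rceil = \lfloor n/q\rfloor + 1$ yields $a + b + 1 = a' + b'$. This splits into exactly the two cases claimed.

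The only point that genuinely requires care—and the closest thing to an obstacle here—is the substitution $q \mapsto q-1$ when applying Lemma \ref{minandmax1}. One must verify that the roles of the two part sizes $q-1$ and $q$ correctly match the hypotheses of that lemma, so that the minimal-partition count comes out as $\lceil n/q\rceil$ (the ceiling of $n$ divided by the \emph{larger} part) rather than $\lceil n/(q-1)\rceil$; and implicitly one relies on the existence of the minimal $(q-1)$-partition, which is granted by the statement's hypothesis. Once this bookkeeping is confirmed, the remainder is a routine floor/ceiling computation.
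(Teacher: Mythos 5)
Your proposal is correct and follows essentially the same route as the paper's own proof: both apply Lemma \ref{minandmax1} to get $a+b=\lfloor n/q\rfloor$ for the maximal $q$-partition and, by the substitution $q\mapsto q-1$, $a'+b'=\lceil n/((q-1)+1)\rceil=\lceil n/q\rceil$ for the minimal $(q-1)$-partition, then conclude via the identity $\lceil n/q\rceil=\lfloor n/q\rfloor$ when $q\mid n$ and $\lceil n/q\rceil=\lfloor n/q\rfloor+1$ otherwise. Your explicit attention to the bookkeeping in the substitution is a sound clarification but does not constitute a different argument.
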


\begin{proof}
By Lemma \ref{minandmax1}, if $n = aq + b(q + 1)$ is the maximal
$q$-partition, then $a + b = \lfloor n/q \rfloor$. If $n = a^\prime
(q-1) + b^\prime q$ is the minimal $(q - 1)$-partition, Lemma
\ref{minandmax1} implies that $a^\prime + b^\prime = \lceil n/(q - 1
+ 1)\rceil = \lceil n/q\rceil$. Consequently, if $q|n$ then $a + b =
\lfloor n/q \rfloor = \lceil n/q\rceil = a^\prime + b^\prime$,
otherwise, $ a^\prime + b^\prime = \lceil n/q\rceil = \lfloor n/q
\rfloor +1 =a + b + 1$.
\end{proof}

 If $n_1 = a_1q + b_1(q + 1)$,
and $n_2 = a_2q + b_2(q + 1)$ are maximal $q$-partition of $n_1$ and
$n_2$, respectively. If $n_1 = a_1^\prime(q - 1) + b_1^\prime q$,
and $n_2 = a_2^\prime(q - 1) + b_2^\prime q $ are minimal $(q -
1)$-partition of $n_1$ and $n_2$, respectively. Lemma
\ref{minandmax2} implies that
\[
 a_1^\prime + b_1^\prime + a_2^\prime + b_2^\prime  = \left\{ {\begin{array}{c@{{},\quad {}}l}
   a_1 + b_1 + a_2 + b_2 & q|n_1~\mbox{and}~q|n_2\\
 a_1 + b_1 + a_2 + b_2 + 2& q\nmid n_1~\mbox{and}~q\nmid n_2\\
a_1 +b_1 + a_2 + b_2 + 1& (q\nmid n_1~\mbox{and}~q|n_2)~\mbox{or}~
(q|n_1 ~\mbox{and}~ q\nmid n_2).
\end{array}} \right.
\]

These results now combine to give a construction of a minimal
equitable coloring of $K_{n_1,n_2,\cdots,n_l}$ and a method to
change the color classes step by step, so that we can increase the
equitable colors one by one. In words, we must give the computation
of the minimum $t$, when $K_{n_1,n_2,\cdots,n_l}$ can be equitably
$k$-colorable for $k \geq t$.

Denote the partite sets of the graph $K_{n_1,n_2,\cdots,n_l}$ as
$N_1, N_2, \cdots, N_l$, with $|N_i|=n_i$. Any given color class of
an equitable coloring must lie entirely in some $N_i$, for otherwise
two of its vertices are nonadjacent. Thus, any equitable coloring
partitions each $N_i$ into color classes $V_{i_1}, V_{i_2}, \cdots,
V_{i_{v_i}}$, no two of which differ in size by more than one. If
the sizes of the color classes are in the set $\{q,q+1\}$, then
these sizes induce $q$-partitions of each $n_i$. Conversely, given a
number $q$, and $q$-partitions $n_i = a_iq + b_i(q + 1)$, of each
$n_i$, there is an equitable coloring of $K_{n_1,n_2,\cdots,n_l}$
with color sizes $q$ and $q+1$; just partition each $N_i$ into $
a_i$ sets of size $q$, and $ b_i$ sets of $q+1$. It follows, then,
that finding an equitable coloring of $K_{n_1,n_2,\cdots,n_l}$
amounts to finding a number $q$, and simultaneous $q$-partitions of
each of numbers $n_i$. By Corollary \ref{noq-partion}, a necessary
condition for $q$ is that $n_i/(q + 1) \leq \lfloor n_i/q \rfloor$
for all $1 \leq i \leq l$. If we want increase colors one by one,
$q$ must be chosen with the additional property that the total
number of color classes is as small as possible. By Lemmas
 \ref{basic}, \ref{minandmax1} and \ref{minandmax2}, it suffices to choose the minimum $q$ for
which there is $i$ such that $n_i/(q + 1) > \lfloor n_i/q \rfloor$
or there are $n_i$ and $n_j$ , such that $q$ divides neither $n_i$
nor $n_j$ . Equivalently, it suffices to choose the maximum $q - 1$
for which there is $i$ such that $n_i/q \leq \lfloor n_i/(q -
1)\rfloor$, and $(q - 1)|n_j$ , for $j \neq i$. Moreover we can
partition each $n_i$ into $a_i = q \lceil n_i/q \rceil - n_i$ of
sizes $q - 1$, and $b_i = n_i - \lceil ni/q \rceil(q - 1)$ of sizes
$q$.

\begin{theorem}\label{theorem1}
$\chi_=^*(K_{n_1,n_2,\cdots,n_l}) = \sum_{i = 1}^l \lceil n_i/h
\rceil$, where $h$ = min\{$q$ : there is $i$ such that $n_i/(q + 1)
> \lfloor n_i/q \rfloor$ or there are $n_i$ and $n_j$, $i \neq j$, such that $q$
divides neither $n_i$ nor $n_j$\}.
\end{theorem}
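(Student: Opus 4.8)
The plan is to reduce the theorem to an analysis of which values $k$ admit an equitable $k$-coloring, and then to locate the smallest threshold above which no value is missed. For a \emph{feasible} $q$ (one for which every $n_i$ has a $q$-partition) write $L(q)=\sum_{i=1}^l\lceil n_i/(q+1)\rceil$ and $U(q)=\sum_{i=1}^l\lfloor n_i/q\rfloor$. By Lemma \ref{minandmax1} these are exactly the minimal and maximal numbers of addends over all $q$-partitions of the individual $n_i$, so by the correspondence recalled before the theorem an equitable coloring with class sizes in $\{q,q+1\}$ uses a total of $m$ colors precisely when $m\in[L(q),U(q)]$. Here one uses that within a single $n_i$ the number of parts of a $q$-partition can be any integer $m_i$ with $\lceil n_i/(q+1)\rceil\le m_i\le \lfloor n_i/q\rfloor$ (solve $n_i=m_iq+b_i$ for the number $b_i$ of $(q+1)$-parts and check $0\le b_i\le m_i$), and that a sum of integer intervals is an integer interval. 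First I would record that the set of equitably colorable $k$ is exactly $\bigcup_{q\ge 1\text{ feasible}}[L(q),U(q)]$, noting that any coloring with sizes in $\{q,q+1\}$ and $k$ classes is automatically equitable because then $q=\lfloor n/k\rfloor$, where $n=\sum_i n_i$.

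The engine of the proof is the elementary identity
\[
L(q-1)-U(q)=\sum_{i=1}^l\Bigl(\lceil n_i/q\rceil-\lfloor n_i/q\rfloor\Bigr)=d_q,
\]
where $d_q:=|\{i:q\nmid n_i\}|$. Since $L$ and $U$ are non-increasing in $q$, the intervals $[L(q),U(q)]$ are stacked in order of decreasing $q$, and two consecutive ones $[L(q),U(q)]$ and $[L(q-1),U(q-1)]$ leave no integer gap exactly when $d_q\le 1$. By the definition of $h$, every $q<h$ is feasible with $d_q\le 1$; hence the intervals for $q=1,\dots,h-1$ chain together into the single interval $[L(h-1),U(1)]=[\sum_i\lceil n_i/h\rceil,\,n]$. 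Adding the trivial observation that every $k>n$ is equitably colorable (classes of size $0$ and $1$), this shows that every $k\ge\sum_i\lceil n_i/h\rceil$ is colorable, giving $\chi_=^*(K_{n_1,\dots,n_l})\le\sum_i\lceil n_i/h\rceil$.

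For the reverse inequality I would prove that $k_0:=\sum_i\lceil n_i/h\rceil-1=L(h-1)-1$ is \emph{not} colorable; together with the previous paragraph this pins the threshold at $L(h-1)$. Suppose $k_0\in[L(q),U(q)]$ for some feasible $q$. From $L(q)\le k_0<L(h-1)$ and monotonicity of $L$ we get $q\ge h$, and then $k_0\le U(q)\le U(h)$ forces $d_h=L(h-1)-U(h)\le 1$. If $d_h\ge 2$ this is already a contradiction, so the only remaining possibility permitted by the definition of $h$ is that $h$ itself is infeasible. In that case some $n_1$ has no $h$-partition, so $h\nmid n_1$ by Corollary \ref{noq-partion}; combined with $d_h\le 1$ this means $h\mid n_j$ for every $j\neq 1$, whence $U(h)=L(h-1)-1=k_0$. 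Feasibility of $q$ excludes $q=h$, so $q\ge h+1$, and for any other positive part $n_j$ one has $\lfloor n_j/q\rfloor\le n_j/q<n_j/h=\lfloor n_j/h\rfloor$, giving $U(q)\le U(h)-1=k_0-1<k_0$, contradicting $k_0\le U(q)$.

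I expect the infeasible-$h$ case of the lower bound to be the main obstacle, since there feasibility of $q$ is not monotone in $q$ and one must rule out the possibility that some larger feasible $q$ silently covers $k_0$; the computation $U(h)=k_0$ together with the strict drop of $U$ past $h$ is what closes this gap, and it quietly uses $l\ge 2$ (the existence of a second nonzero part). The degenerate single-part case $l=1$, where $K_{n_1}$ is edgeless and trivially equitably $k$-colorable for all $k$, should be excluded or treated separately. The remaining verifications — the endpoint formulas from Lemma \ref{minandmax1}, the ``every intermediate part-count is attainable'' step, and the monotonicity of $L$ and $U$ — are routine.
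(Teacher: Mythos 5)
Your proposal is correct, provided the standing assumption $l\ge 2$ (with all $n_i\ge 1$) is made explicit, as you rightly flag --- and this is not cosmetic: for $l=1$ the formula itself fails (e.g.\ $n_1=5$ gives $h=3$, hence the value $2$, while the edgeless graph is equitably $1$-colorable), so both your proof and the paper's implicitly need at least two nonempty parts. Your route, however, is genuinely different from the paper's. The paper argues by induction on the number of colors: it starts from the minimal $(h-1)$-partitions and, at each step, either splits a non-maximal $q$-partition (Lemma \ref{maximal1}) or converts all maximal $q$-partitions into minimal $(q-1)$-partitions (Lemma \ref{minandmax2}), with subcases according to divisibility; its lower bound goes through a separate claim (Claim 2) that an equitable $(\sum_i\lceil n_i/h\rceil-1)$-coloring would force class sizes $h$ and $h+1$. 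You instead describe, in one stroke, the entire set of achievable color counts as the union of the intervals $[L(q),U(q)]$ over feasible $q$, together with all $k>n$, using the intermediate-value property of part counts, and then both bounds fall out of the single gap identity $L(q-1)-U(q)=d_q$ plus monotonicity of $L$ and $U$. Your method buys three things: a description of \emph{all} equitably colorable $k$, not just the threshold; the replacement of the paper's case analysis and of Lemmas \ref{maximal1} and \ref{minandmax2} by one computation; and a sounder lower bound. Indeed, the paper's proof of Claim 2 rests on the per-term strict inequality $\lfloor n_i/q\rfloor<\lfloor n_i/h\rfloor$ for $q>h$, which is false in general (take $n_i=5$, $h=3$, $q=4$), and the situation where this matters is exactly the one you isolate --- $h$ infeasible with $d_h=1$, so that $U(h)=k_0$ --- which you close via the strict drop $U(q)\le U(h)-1$ coming from a second positive part. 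So besides being a different (global, non-inductive) proof, your argument repairs a real gap in the published one.
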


\begin{proof}
We prove that $K_{n_1,n_2,\cdots,n_l}$ is equitably $k$-colorable
for any $k \geq \sum_{i=1}^l \lceil n_i/h \rceil$ by induction on
$k$.

First, we prove that $K_{n_1,n_2,\cdots,n_l}$ is equitably $\sum_{i
= 1}^l \lceil n_i/h \rceil$-colorable. Set $h^\prime = h - 1$, by
the definition of $h$, $n_i/(h^\prime + 1) \leq \lfloor n_i/h^\prime
\rfloor$, for $1 \leq i \leq l$. Corollary \ref{noq-partion} implies
that each $n_i$ has an $h^\prime$-partition. Let $n_i=a_ih^\prime +
b_i(h^\prime + 1)$ be the minimal $h^\prime$-partition of each
$n_i$. By Lemma \ref{minandmax1}, $a_i + b_i = \lceil n_i/(h^\prime
+ 1)\rceil = \lceil n_i/h\rceil$, and hence we get an equitable
$\sum_{i = 1}^l \lceil n_i/h \rceil$-coloring of
$K_{n_1,n_2,\cdots,n_l}$. It is straightforward to check that
$K_{n_1,n_2,\cdots,n_l}$ is equitably $\sum_{i = 1}^l \lceil n_i/h
\rceil$-colorable.

Now, we assume that $K_{n_1,n_2,\cdots,n_l}$ is equitably
$k$-colorable for some $k \geq \sum_{i=1}^l \lceil n_i/h \rceil$. It
suffices to prove $K_{n_1,n_2,\cdots,n_l}$ is equitably $(k +
1)$-colorable.

By the assumption, each $n_i$ has a $q$-partition $n_i = a_iq +
b_i(q+1)$ such that $\sum_{i = 1}^l(a_i + b_i) = k $.

\noindent\textbf{Claim 1}   \emph{$0 \leq q \leq h - 1 < h$.}

Suppose to the contrary that $q \geq h$. By Lemma \ref{minandmax1},
$a_i+b_i \leq \lfloor n_i/q\rfloor$, and hence $k = \sum_{i =
1}^l(a_i + b_i)
 \leq \sum_{i = 1}^l\lfloor n_i/q\rfloor \leq \sum_{i = 1}^l n_i/q \leq \sum_{i = 1}^l n_i/h
 $. By the definition of $h$, there are $n_i$ and $n_j$, $i \neq j$, such that $h$
divides neither $n_i$ nor $n_j$, or there is some $n_i$ such that
$n_i/(h + 1)
> \lfloor n_i/h \rfloor$. Either case implies that $h \nmid
 n_i$ for some $n_i$. Hence $k \leq \sum_{i = 1}^l n_i/h < \sum_{i = 1}^l \lceil
n_i/h \rceil$. This is a contradiction to $k \geq \sum_{i=1}^l
\lceil n_i/h \rceil$. The claim follows.

To prove $K_{n_1,n_2,\cdots,n_l}$ is equitably $(k+1)$-colorable, we
consider two cases.

 \textbf{Case 1}: There is some $n_i$ such that
whose $q$-partition $n_i = a_iq + b_i(q+1)$ is not maximal. By Lemma
\ref{maximal1}, $b_i \geq q$, so we can rewrite $n_i = (a_i + q +
1)q + (b_i - q)(q+1)$. Thus there is a $q$-partition of $n_i$ with $
a_i + q + 1 + b_i - q = a_i + 1 + b_i$ addends. Hence, we get an
equitable $(k + 1)$-coloring of $K_{n_1,n_2,\cdots,n_l}$.

\textbf{Case 2}: Each $q$-partition $n_i = a_iq + b_i(q+1)$ is
maximal. By Claim 1, $0 \leq q \leq h - 1 < h$, the definition of
$h$ implies that $q$ divides $n_i$ for all $i$ with at most one
exception.

\textbf{Subcase 2.1}: There is no $i$ such that $q \nmid
  n_i$, in other words, $q|n_i$ for all $i$. By Lemma
 \ref{minandmax2}, each maximal $q$-partition is the minimal
 $(q-1)$-partition of $n_i$. Since $0 \leq q - 1 \leq h - 2 < h$, it implies that $q - 1$ divides
 $n_i$ for all $i$ with at most one
exception. Consequently, there is some $n_j$ such that $(q - 1)|n_j$
and $q|n_j$, and the number of addends of minimal (respectively,
maximal) $(q-1)$-partition is equal to $\lceil n_j/q \rceil = n_j/q$
(respectively, $\lfloor n_j/(q-1) \rfloor = n_j/(q - 1)$) by Lemma
\ref{minandmax1}. Since $n_j/(q-1) > n_j/q$, the minimal
$(q-1)$-partition is not the maximal $(q-1)$-partition of $n_j$.
Thus, the minimal $(q -1)$-partition of $n_j$ is just not maximal.
So it turn into case 1. So we can obtain an equitable $(k +
1)$-coloring of $K_{n_1,n_2,\cdots,n_l}$.

\textbf{Subcase 2.2}: There is exactly an $i$ such that $q \nmid
 n_i$, and at the same time, $q|n_j$ for  $j \neq i$, with $1 \leq j \leq
 l$. By Lemma \ref{minandmax2}, each maximal $q$-partition of $n_j$ is the minimal
 $(q-1)$-partition of $n_j$. Since $q \nmid
 n_i$, and $q < h$, by the definition of $h$, $n_i/q
\leq \lfloor n_i/(q - 1) \rfloor$. Corollary \ref{noq-partion}
implies that $n_i$ has a $(q - 1)$-partition. Let the partition $n_i
= a_iq + b_i(q+1)$ (respectively, the partition $n_i = a_i^\prime (q
- 1) + b_i^\prime q$) be the maximal $q$-partition (respectively,
the minimal $(q - 1)$-partition) of $n_i$, the number of addends
$a_i^\prime + b_i^\prime$ is equal to $a_i + b_i + 1$ by Lemma
\ref{minandmax1}. So we obtain an equitable $k+1$-coloring of
$K_{n_1,n_2,\cdots,n_l}$.

In a word ,we has proved that $\chi_=^*(K_{n_1,n_2,\cdots,n_l}) \leq
\sum_{i = 1}^l \lceil n_i/h \rceil$.

Next we prove that $K_{n_1,n_2,\cdots,n_l}$ is not equitably
($\sum_{i = 1}^l \lceil n_i/h \rceil - 1)$-colorable.

Suppose to the contrary that $K_{n_1,n_2,\cdots,n_l}$ is equitably
($\sum_{i = 1}^l \lceil n_i/h \rceil - 1)$-colorable. Then, each
$n_i$ has a $q$-partition $n_i = a_iq + b_i(q+1)$ such that $k =
\sum_{i = 1}^l(a_i + b_i) = \sum_{i = 1}^l \lceil n_i/h \rceil - 1$.

\noindent\textbf{Claim 2}   \emph{$q = h$}

First, we prove that $q \geq h$. Suppose to the contrary that $q
\leq h-1 < h$. By Lemma \ref{minandmax1}, $(a_i + b_i) \geq \lceil
n_i/(q+1) \rceil$, thus $\sum_{i = 1}^l(a_i + b_i) \geq \sum_{i =
1}^l \lceil n_i/(q+1) \rceil \geq \sum_{i = 1}^l \lceil n_i/h
\rceil$. This is a contradiction to $k  = \sum_{i = 1}^l \lceil
n_i/h \rceil - 1$. Second, we prove that $q \leq h$. Suppose to the
contrary that $q > h$. Lemma \ref{minandmax1} implies that $(a_i +
b_i) \leq \lfloor n_i/q \rfloor < \lfloor n_i/h \rfloor$. By the
definition of $h$, there is some $n_i$ such that $n_i \nmid h$,
clearly, $ \lceil n_i/h\rceil - 1 = \lfloor n_i/h \rfloor$. Thus, $k
< \sum_{i = 1}^l \lceil n_i/h \rceil - 1$. This is a contradiction
to $k = \sum_{i = 1}^l \lceil n_i/h \rceil - 1$. The claim follows.

Now, we consider two cases of $h$.

\textbf{case 1}: $h$ = min\{$q$ : there is $i$ such that $n_i/(q +
1)
> \lfloor n_i/q \rfloor$ \}. By Corollary \ref{noq-partion}, there is no $h$-partition of $n_i$. It is
contrary to that each $n_i$ is partitioned into sets of the sizes
$h$ or $h+1$.

\textbf{case 2}: $h$ = min\{$q$ :
 there are $n_i$ and $n_j$, $i \neq j$, such that $q$
divides neither $n_i$ nor $n_j$\}. Let $n_i = a_i^\prime (h - 1) +
b_i^\prime h$, $n_i = a_j^\prime (h - 1) + b_j^\prime h$ be the
minimal $(h - 1)$-partition of $n_i$ and $n_j$, respectively. Let
$n_i = a_ih + b_i(h + 1)$, $n_j = a_jh + b_j(h + 1)$ be the maximal
$h$-partition of $n_i$ and $n_j$, respectively.
Lemma\ref{minandmax2} implies that $ a_i  +  b_i +  a_j +  b_j =
a_i^\prime  +  b_i^\prime  +  a_j^\prime  +  b_j^\prime  - 2 $. And
hence, $\sum_{i = 1}^l(a_i + b_i) \leq \sum_{i = 1}^l \lceil n_i/h
\rceil - 2$. Consequently ,we can not obtain an equitable ($\sum_{i
= 1}^l \lceil n_i/h \rceil - 1$)-coloring of
$K_{n_1,n_2,\cdots,n_l}$.

Therefore, $\chi_=^*(K_{n_1,n_2,\cdots,n_l}) \geq \sum_{i = 1}^l
\lceil n_i/h \rceil$, and so  $\chi_=^*(K_{n_1,n_2,\cdots,n_l})=
\sum_{i = 1}^l \lceil n_i/h \rceil$.
\end{proof}

Theorem \ref{theorem1} leads immediately to an algorithm which finds
the minimal equitable coloring of $K_{n_1,n_2,\cdots,n_l}$ such that
we can increase the colors one by one, through we adjust the
partition of $n_i$ step by step.

\textbf{Equitable Chromatic threshold algorithm}

Let $K_{n_1,n_2,\cdots,n_l}$ be a complete multipartite graph, where
the partite sets of the graph $K_{n_1,n_2,\cdots,n_l}$ are denoted
as $N_1, N_2, \cdots, N_l$, with $|N_i|=n_i$. Let $s^*$ =
min\{$s_i^*$, where $s_i^*$ is the minimum positive integer such
that $s_i^*\nmid n_i$\}.

(0) Set $h = s^*$.

(1) If there are $n_i$ and $n_j$, such that $h$ divides neither
$n_i$ nor $n_j$, with $i \neq j$, stop. Otherwise, go to (2).

(2) There is $i$ such that $h\nmid n_i$, and $h|n_j$ with $i \neq
j$. If $n_i/(h + 1)
> \lfloor n_i/h \rfloor$ , stop. Otherwise, go to (3).

(3) Let $h = h + 1$, go to (1).

The equitable Chromatic threshold of $K_{n_1,n_2,\cdots,n_l}$ is
$\sum_{i = 1}^l \lceil n_i/h \rceil$. Notice that the complexity of
the algorithm is linear in $|V(K_{n_1,n_2,\cdots,n_l})|$.

According Theorem \ref{theorem1}, we have the following corollary
which is a W.-H. Lin's\cite{W.-H. Lin2010} result.

\begin{corollary}\label{corollary2}\cite{W.-H. Lin2010}
If integers $n \geq 1$ and $r \geq 2$, then
$\chi_=^*(K_{\underbrace{n,n,\cdots,n}_{r}}) = r\lceil n/s^*
\rceil$, where $s^*$ is the minimum positive integer such that $s^*
\nmid n$.
\end{corollary}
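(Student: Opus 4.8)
The plan is to obtain the corollary as a direct specialization of Theorem \ref{theorem1} to the balanced case $l=r$ and $n_1=n_2=\cdots=n_r=n$. Under these substitutions the formula of Theorem \ref{theorem1} collapses to
\[
\chi_=^*(K_{\underbrace{n,n,\cdots,n}_{r}}) = \sum_{i=1}^r \lceil n/h \rceil = r\lceil n/h\rceil,
\]
so the entire task reduces to identifying the quantity $h$ appearing there with $s^*$.

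First I would simplify the defining condition for $h$ in this symmetric setting. Since every part has the same size $n$ and $r\ge 2$, the clause ``there are $n_i$ and $n_j$, $i\neq j$, such that $q$ divides neither $n_i$ nor $n_j$'' holds precisely when $q\nmid n$, because one may take any two distinct parts. Hence in this case
\[
h = \min\{q : n/(q+1) > \lfloor n/q \rfloor ~\mbox{or}~ q\nmid n\}.
\]

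Next I would prove $h=s^*$ by testing both clauses against $s^*$. Recall that $1\mid n$, so $s^*\ge 2$, and by minimality of $s^*$ we have $q\mid n$ for every $q$ with $1\le q<s^*$. For such a $q$ the second clause $q\nmid n$ fails; the first clause fails as well, since $q\mid n$ gives $\lfloor n/q\rfloor = n/q$, while $q+1>q$ forces $n/(q+1) < n/q = \lfloor n/q\rfloor$. Thus no $q<s^*$ satisfies the disjunction. On the other hand, at $q=s^*$ the definition of $s^*$ gives $s^*\nmid n$, so the second clause holds. Therefore $s^*$ is the least $q$ meeting the condition, i.e. $h=s^*$.

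Substituting $h=s^*$ into the displayed formula yields $\chi_=^*(K_{n,\dots,n}) = r\lceil n/s^*\rceil$, as claimed. The only point requiring a moment's care is confirming that the first clause cannot be triggered by a small divisor $q$ of $n$; once the elementary monotonicity observation that $n/(q+1) < n/q$ whenever $q\mid n$ is in hand, the identification $h=s^*$ is immediate, so I do not anticipate any genuine obstacle here.
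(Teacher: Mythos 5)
Your proposal is correct and follows essentially the same route as the paper, which presents the corollary as a direct consequence of Theorem \ref{theorem1}: with all parts equal to $n$ and $r\ge 2$, the divisibility clause in the definition of $h$ reduces to $q\nmid n$, and your verification that no divisor $q$ of $n$ can trigger the clause $n/(q+1)>\lfloor n/q\rfloor$ correctly yields $h=s^*$. The paper itself leaves this specialization implicit, so your write-up simply makes explicit the argument the authors intended.
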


\end{document}